\documentclass[final]{dmtcs-episciences}

\usepackage[utf8]{inputenc}
\usepackage{subfigure}

\usepackage[round]{natbib}
\author{Michael Albert\affiliationmark{1}
  \and Robert Brignall\affiliationmark{2}}
\title[2 $\times$ 2 monotone grid classes are finitely based]{2 $\times$ 2 monotone grid classes are finitely based}
\affiliation{%
  Department of Computer Science, University of Otago, New Zealand\\
  Department of Mathematics and Statistics, The Open University, England}
\keywords{grid class, basis, permutation, pattern}
\received{2015-11-3}
\accepted{2016-1-7}

\usepackage{amsmath}
\usepackage{amsthm}
\usepackage{amssymb}
\usepackage{latexsym}
\usepackage{textcomp}
\usepackage{enumerate}
\usepackage{color}
\usepackage{tikz}
\usetikzlibrary{matrix, calc, arrows}

\definecolor{lightgray}{rgb}{0.9, 0.9, 0.9}
\definecolor{darkgray}{rgb}{0.7, 0.7, 0.7}
\definecolor{darkblue}{rgb}{0, 0, .4}

\hypersetup{
    colorlinks=true,
    linkcolor=darkblue,
    anchorcolor=darkblue,
    citecolor=darkblue,
    urlcolor=darkblue,
    pdfpagemode=UseThumbs,
    pdftitle={2x2 monotone grid classes are finitely based},
    pdfsubject={Combinatorics},
    pdfauthor={M. Albert and R. Brignall},
    pdfkeywords={grid class, basis, permutation, pattern}
}
\theoremstyle{plain}
\newtheorem{theorem}{Theorem}[section]

\newtheorem{lemma}[theorem]{Lemma}
\newtheorem{observation}[theorem]{Observation}

\theoremstyle{definition}

\theoremstyle{remark}


\newcommand{\Av}{\operatorname{Av}}

\newcommand{\A}{\mathcal{A}}
\newcommand{\B}{\mathcal{B}}
\newcommand{\C}{\mathcal{C}}
\newcommand{\D}{\mathcal{D}}
\newcommand{\E}{\mathcal{E}}
\newcommand{\F}{\mathcal{F}}

\newcommand{\M}{\mathcal{M}}

\newcommand{\hjuxta}[2]{\left[\begin{array}{cc}#1&#2\end{array}\right]}

\newcommand{\Grid}{\operatorname{Grid}}

\tikzstyle{every node}=[circle, draw, fill=black,
                        inner sep=0pt, minimum width=4pt]
\tikzset{invis_nd/.style={fill=none,draw=none}}
\def\tgrid(#1,#2){
\draw [thin, gray] (0,0) grid (#1,#2);
}
\def\tup(#1,#2){
\pgfmathsetmacro\xa{#1 - 0.85};
\pgfmathsetmacro\ya{#2 - 0.85};
\pgfmathsetmacro\xb{#1 - 0.15};
\pgfmathsetmacro\yb{#2 - 0.15};
\draw [thick] (\xa,\ya) -- (\xb,\yb);
}
\def\tdown(#1,#2){
\pgfmathsetmacro\xa{#1 - 0.85};
\pgfmathsetmacro\ya{#2 - 0.15};
\pgfmathsetmacro\xb{#1 - 0.15};
\pgfmathsetmacro\yb{#2 - 0.85};
\draw [thick] (\xa,\ya) -- (\xb,\yb);
}
\def\tclass(#1,#2)#3{
\pgfmathsetmacro\x{#1 - 0.5};
\pgfmathsetmacro\y{#2 - 0.5};
\node[draw=none, fill=none] at (\x,\y) {#3};
}
\def\twobytwo{\tikz[scale=0.2,baseline=0.2cm-0.5ex]{\tgrid(2,2);}}

\DeclareRobustCommand\squintA{
\tikz[scale=0.2,baseline=0.2cm-0.5ex]{
\draw [thin, gray] (0,0) rectangle (2,2);
\draw [thin, gray] (1,0) -- (1,2);
\draw [thin, gray] (0,0.8) -- (1,0.8);
\draw [thin, gray] (1,1.2) -- (2,1.2);
}
}
\DeclareRobustCommand\squintB{
\tikz[scale=0.2,baseline=0.2cm-0.5ex]{
\draw [thin, gray] (0,0) rectangle (2,2);
\draw [thin, gray] (1,0) -- (1,2);
\draw [thin, gray] (0,1.2) -- (1,1.2);
\draw [thin, gray] (1,0.8) -- (2,0.8);
}
}

\begin{document}
\publicationdetails{18}{2016}{2}{1}{1325}
\maketitle

\begin{abstract}In this note, we prove that all $2 \times 2$ monotone grid classes are finitely based, i.e., defined by a finite collection of minimal forbidden permutations. This follows from a slightly more general result about certain $2 \times 2$ (generalised) grid classes that have two monotone cells in the same row. 
\end{abstract}

\section{Introduction}

In recent years, the emerging theory of grid classes has led to some of the major structural and enumerative developments in the study of permutation patterns. Particular highlights include the characterisation of all possible ``small'' growth rates~\citep{huczynska:grid-classes-an:,kaiser:on-growth-rates:,vatter:small-permutati:} and the subsequent result that all classes with these growth rates have rational generating functions~\citep{albert:inflations-of-g:}. 

To support results such as these, the study of grid classes themselves has gained importance. Restricting one's attention to \emph{monotone} grid classes, it is known that the structure of the matrix defining a grid class determines both its growth rate~\citep{bevan:growth-rates:}, and whether it is well-partially-ordered~\citep{murphy:profile-classes:}. 

One remaining open question about monotone grid classes concerns their \emph{bases}, that is, the sets of minimal forbidden permutations of the classes. Backed up by some computational evidence, it is widely believed that all monotone grid classes are finitely based, but this is only known to be true for certain families, most notably those whose row-column graphs\footnote{The row-column graph of a $\{0,\pm1\}$-matrix $\M$ is the bipartite graph whose biadjacency matrix has $ij$-th entry equal to $|\M_{ij}|$.} are \emph{forests}~\citep{albert:geometric-grid-:}. To date, the only other instances of monotone grid classes that are known to have a finite basis are two $2\times 2$ grid classes. The first concerns the class of \emph{skew-merged} permutations, $\Av(2143,3412)$, in~\citep{stankova:forbidden-subse:}, while the second is in Waton's PhD thesis~\citep{waton:on-permutation-cl:}. Inspired by Waton's approach, we show that a certain family of (non-monotone) $2\times 2$ grid classes are all finitely based, from which we can conclude the following result.

\begin{theorem}\label{thm-twobytwo-mono}
Every $2\times 2$ monotone grid class is finitely based.
\end{theorem}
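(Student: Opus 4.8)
\emph{Proof proposal.}

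The plan is to prove Theorem~\ref{thm-twobytwo-mono} by a two-case split governed by the row-column graph $G(M)$ of the defining $\{0,\pm1\}$-matrix $M$, deferring all of the genuinely new work to a single statement about $2\times2$ grid classes with two monotone cells in a common row.

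First I would deal with the case in which some cell of $\Grid(M)$ is empty. Since $M$ is $2\times2$, the graph $G(M)$ is bipartite with both parts of size $2$, so the only cycle it could contain is a $4$-cycle, and such a cycle uses all four of its possible edges --- that is, it can occur only when all four entries of $M$ are non-zero. Hence if even one entry of $M$ vanishes, $G(M)$ is a forest, and then by \citep{albert:geometric-grid-:} the class $\Grid(M)$ is a geometric grid class and so is finitely based (indeed well-quasi-ordered). This disposes of every $2\times2$ monotone grid class except those in which all four cells are non-empty monotone cells.

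So suppose instead that all four entries of $M$ lie in $\{+1,-1\}$. Then in particular the two cells in the first row of $\Grid(M)$ are non-empty monotone cells lying in a common row, so $\Grid(M)$ falls within the scope of the more general result this note establishes about $2\times2$ grid classes with two monotone cells in a row; applying that result completes the proof. Up to the symmetries of inverse, reverse and complement --- each of which preserves the property of being finitely based --- there are only a handful of matrices $M$ of this kind, one of them giving the skew-merged permutations $\Av(2143,3412)$, already known to be finitely based \citep{stankova:forbidden-subse:}; the general theorem treats them all uniformly.

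The reduction above is essentially free, and I expect the real obstacle to sit entirely inside the general theorem. The strategy there --- in the spirit of Waton's thesis \citep{waton:on-permutation-cl:} --- would be to take a permutation $\pi$ minimal with respect to not lying in the class and to bound its length: for each point $p$ of $\pi$ the deletion $\pi\setminus p$ does lie in the class and hence is griddable, and one wants to show that, because an entire row of the matrix is monotone, the possible ``shapes'' of these griddings are so constrained that, once $\pi$ is long enough, a pigeonhole (Ramsey-type) argument produces griddings of $\pi\setminus p$ and $\pi\setminus q$ that can be amalgamated into a gridding of $\pi$ itself --- contradicting $\pi$'s minimality. Making the notion of ``shape'' precise, controlling where the horizontal and vertical dividing lines fall relative to the deleted point, and carrying out the amalgamation across the unique $4$-cycle of the row-column graph is the technical heart of the matter.
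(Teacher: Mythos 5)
Your reduction is exactly the paper's: empty-cell cases have forest row--column graphs and are finitely based by the geometric grid class result, and the all-cells-nonempty cases are instances (up to symmetry) of the general theorem about $2\times2$ grids with a monotone row, so the proof of Theorem~\ref{thm-twobytwo-mono} itself is correct and essentially identical to the paper's. Note only that your closing sketch of how the general theorem would be established differs from what the paper actually does --- the paper works with the relative basis of $\E$ inside the finitely based juxtaposition class $\F$ and the two ``squint'' classes of Lemma~\ref{lem-squint-intersection}, bounding basis elements directly, rather than amalgamating griddings of one-point deletions via a Ramsey-type argument --- but that lies outside the statement under review.
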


The rest of this section covers a number of prerequisite definitions. In Section~\ref{sec-relative-bases} we introduce a more general construction than grid classes, based on juxtapositions, that are known to be finitely based, and use these to characterise the grid classes they contain. In Section~\ref{sec-main} we consider three separate cases that will enable us to prove our more general result (Theorem~\ref{thm-twobytwo}), and thence Theorem~\ref{thm-twobytwo-mono}.

Writing permutations in one-line notation, we say that the permutation $\sigma$ is \emph{contained} in a permutation $\pi$, denoted $\sigma\leq\pi$, if there is a subsequence of the entries of $\pi$ that have the same relative ordering as the entries of $\sigma$. A specific instance of a set of entries of $\pi$ witnessing this containment is called a \emph{copy} of $\sigma$ in $\pi$. Containment forms a partial order on the set of all permutations, and sets of permutations which are closed downwards in this order are called \emph{permutation classes}. Specifically, if $\C$ is a permutation class, $\pi\in\C$ and $\sigma\leq\pi$, then we must have $\sigma\in\C$. For convenience later, we regard the empty permutation as belonging to every permutation class.

While permutation classes can be defined in a number of ways (for example, the set of all permutations that can be sorted by a stack forms a permutation class), a convenient characterisation can be given in terms of the unique set of minimal forbidden permutations that do \emph{not} lie in the class. We call the set $B$ the \emph{basis} of a class $\C$ if
\[
	\C = \{\pi: \beta\not\leq\pi\text{ for all }\beta\in B\},
\]
and $B$ is minimal with this property, and we write $\C=\Av(B)$. By its minimality, the set $B$ must form an antichain under $\leq$, but since infinite antichains are know to exist in the containment partial order, $B$ need not be finite. When the basis of $\C$ is finite, we say that $\C$ is \emph{finitely based}.

We frequently make use of a graphical perspective, in which we represent a permutation $\pi$ by plotting the points $(i,\pi(i))$ ($i=1,\dots,|\pi|$) in the plane. Indeed, we do not distinguish between the permutation $\pi$ written in one-line notation, and the graphical representation of $\pi$.

For $m,n\geq 1$, let $\M$ be an $m\times n$ matrix whose entries are permutation classes (including possibly the empty class). The \emph{grid class} of the matrix $\M$, denoted $\Grid(\M)$, is the permutation class consisting of all permutations $\pi$ for which (in the graphical perspective) there exist $m-1$ horizontal and $n-1$ vertical lines which divide the entries of $\pi$ into $mn$ rectangles, so that the (possibly zero) entries of $\pi$ in each rectangle form a copy of a permutation from the class in the corresponding entry of $\M$. When the entries of $\M$ are all either $\Av(12)$, $\Av(21)$ or $\emptyset$, then $\Grid(\M)$ is a \emph{monotone} grid class.

We are mostly concerned with $2\times 2$ matrices in this paper, and in this case it will prove convenient to refer to these grid classes more succinctly. If $\M = \begin{pmatrix}\A&\B\\\C&\D\end{pmatrix}$ is a matrix consisting of permutation classes, then we write 
\[
	\tikz[scale=0.5,baseline=0.5cm-0.5ex]{\tgrid(2,2);\tclass(1,2){$\A$};\tclass(2,2){$\B$};\tclass(1,1){$\C$};\tclass(2,1){$\D$};}
	\]
to mean $\Grid(\M)$. Additionally, when (say) $\A = \Av(21)$, then we may refer to the cell \tikz[scale=0.5,baseline=0.2cm-0.5ex]{\tgrid(1,1);\tclass(1,1){$\A$};} using \tikz[scale=0.5,baseline=0.2cm-0.5ex]{\tgrid(1,1);\tup(1,1);}, reflecting the fact that all points in this cell are increasing. Similarly, we may write \tikz[scale=0.5,baseline=0.2cm-0.5ex]{\tgrid(1,1);\tdown(1,1);} when $\A=\Av(12)$. Finally, where the entries of the $2\times 2$ matrix $\M$ are either arbitrary or clear from the context, we may also simply refer to $\Grid(\M)$ as\ \twobytwo. 

We are ready to state our general theorem, from which Theorem~\ref{thm-twobytwo-mono} will follow.

\begin{theorem}\label{thm-twobytwo}
Let $\C$ and $\D$ be finitely based permutation classes. Then the three grid classes
\begin{center}
\begin{tabular}{ccccc}
\tikz[scale=0.5]{
\tgrid(2,2);
\tup(1,1);
\tup(2,1);
\tclass(1,2){$\C$};
\tclass(2,2){$\D$};
}
&\hspace{20pt}&
\tikz[scale=0.5]{
\tgrid(2,2);
\tup(1,1);
\tdown(2,1);
\tclass(1,2){$\C$};
\tclass(2,2){$\D$};
}
&\hspace{20pt}&
\tikz[scale=0.5]{
\tgrid(2,2);
\tdown(1,1);
\tup(2,1);
\tclass(1,2){$\C$};
\tclass(2,2){$\D$};
}
\end{tabular}
\end{center}
are all finitely based. 
\end{theorem}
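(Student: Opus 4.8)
The plan is to realise each of the three grid classes as a subclass of a finitely based juxtaposition, and then to show that only finitely many additional forbidden permutations are required to carve the grid class out of that juxtaposition. Fix one of the three cases, and write $\mathcal{G}$ for the grid class concerned, with monotone cells $\E_1,\E_2\in\{\Av(12),\Av(21)\}$ in its bottom row and finitely based classes $\C,\D$ above them. Let $\mathcal{J}=\hjuxta{\vjuxta{\C}{\E_1}}{\vjuxta{\D}{\E_2}}$ be the horizontal juxtaposition of the two columns of $\mathcal{G}$, each read as a $2\times1$ grid class. Since $\C$, $\D$, $\Av(12)$ and $\Av(21)$ are finitely based, applying the juxtaposition theorem of Section~\ref{sec-relative-bases} first to each column and then to the pair of columns shows that $\mathcal{J}$ is finitely based; and $\mathcal{G}\subseteq\mathcal{J}$, because a gridding of a permutation of $\mathcal{G}$ is in particular such a juxtaposition decomposition. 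It therefore suffices to exhibit a \emph{finite} set $F$ of permutations, none of which lies in $\mathcal{G}$, such that every permutation in $\mathcal{J}\setminus\mathcal{G}$ contains a member of $F$: then $\mathcal{G}=\mathcal{J}\cap\Av(F)=\Av(B_{\mathcal{J}}\cup F)$, where $B_{\mathcal{J}}$ is the (finite) basis of $\mathcal{J}$, so $\mathcal{G}$ is finitely based.

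To build $F$, take a permutation $\pi\in\mathcal{J}$ and fix a juxtaposition decomposition of it: a vertical line separating $\pi$ into a left part and a right part, and two, possibly differently placed, horizontal lines cutting these parts in accordance with the two columns of $\mathcal{G}$. Tighten the decomposition, pushing each horizontal line as far towards the other as its cell constraints permit. If the lines then coincide we have a gridding and $\pi\in\mathcal{G}$, so suppose not; say the right line ends below the left one. Tightness means the left line cannot be lowered — that would force a copy of some $\beta\in\operatorname{basis}(\C)$ into the upper-left cell — and the right line cannot be raised — that would force a copy of the forbidden pattern of $\E_2$ (that is, $12$ or $21$) into the lower-right cell. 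One checks that the copy of $\beta$ lies strictly above this monotone pair in value and strictly to its left in position, so together they constitute a permutation on $|\beta|+2$ points; symmetrically one obtains permutations built from basis elements of $\D$. As $\C$ and $\D$ are finitely based there are boundedly many such permutations, and $F$ will consist of all of them, each enlarged by a bounded number of further points of $\pi$, as explained next.

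That enlargement is the step I expect to be the main obstacle. A copy of $\beta\in\operatorname{basis}(\C)$ in the left column of $\pi$ does not on its own certify that $\pi\notin\mathcal{G}$: a gridding of a sub-permutation of $\pi$ is free to run its vertical line \emph{through} the copy of $\beta$, splitting it between the cells $\C$ and $\D$, and since $\beta$ need not be indecomposable with respect to the pair $(\C,\D)$, this may yield a valid gridding. So the size-$(|\beta|+2)$ permutation found above must be padded with ``guard'' points that defeat every such split. There are only boundedly many ways a vertical line can cross a copy of $\beta$; and because those same vertical lines also fail to grid $\pi$ itself, each offending split is already blocked inside $\pi$ by a bounded sub-configuration — a misplaced monotone pair, or a further basis-element copy lying on the appropriate side of the line — which can be adjoined. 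Checking that the required guard points genuinely appear in $\pi$ whenever $\pi\in\mathcal{J}\setminus\mathcal{G}$, and that this padding closes up after finitely many rounds, is the technical core of Section~\ref{sec-main}, and is carried out case by case: the up--up, up--down and down--up patterns are driven by the same mechanism but are pairwise inequivalent under the reverse, complement and inverse symmetries, so each demands its own bookkeeping.

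Finally, Theorem~\ref{thm-twobytwo-mono} follows. A $2\times2$ monotone matrix with an empty cell has a row--column graph with at most three edges, hence a forest, so its grid class is finitely based by~\citep{albert:geometric-grid-:}. If every cell is monotone, then — since finite-basedness is preserved by reflecting a permutation in a vertical axis, and such a reflection turns a bottom row $(\downarrow,\downarrow)$ into $(\uparrow,\uparrow)$ — we may assume the bottom row is one of $(\uparrow,\uparrow)$, $(\uparrow,\downarrow)$ or $(\downarrow,\uparrow)$, and the class is then an instance of Theorem~\ref{thm-twobytwo} with $\C$ and $\D$ the two monotone classes occupying its top row, both of which are finitely based.
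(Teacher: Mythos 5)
Your overall architecture is exactly the paper's: embed the grid class $\E$ in the finitely based juxtaposition $\F=\hjuxta{\vjuxta{\C}{\E_1}}{\vjuxta{\D}{\E_2}}$, and reduce to showing that only finitely many extra forbidden permutations (the relative basis of $\E$ in $\F$) are needed. Your closing reduction of Theorem~\ref{thm-twobytwo-mono} to Theorem~\ref{thm-twobytwo} is also the paper's. The problem is that your third paragraph, which you yourself flag as ``the technical core,'' is not an argument but a promise. You propose to take the bounded obstruction attached to one tightened decomposition and then ``pad'' it with guard points, one bounded sub-configuration per offending placement of the vertical line, asserting that ``this padding closes up after finitely many rounds.'' Nothing in the proposal supports that assertion, and it is exactly the point at risk: the number of candidate v-lines grows with $|\pi|$, each can come with its own pair of h-lines, and so the naive guard-point scheme threatens to adjoin an unbounded number of points. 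Finiteness of the relative basis is precisely the statement that this does not happen, so it cannot be assumed.

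The paper closes this gap with two ingredients you do not have. First, Lemma~\ref{lem-squint-intersection} shows $\E=\squintA\cap\squintB$, so a relative basis element fails in only one ``direction'' (every division triple has $\ell$ strictly below $r$, or every one has $\ell$ strictly above $r$); this halves the bookkeeping and is what makes a uniform witness possible. Second, and more importantly, for each of the three bottom rows the paper proves that among all division triples with $\ell$ chosen as high as possible, $\ell$ takes only \emph{two} heights --- those attached to the leftmost and rightmost v-lines --- by an explicit analysis of the points $a,b,c,d$ defining those lines (Figures~\ref{fig-highest-left-lines}, \ref{fig-squintb}, \ref{fig-casec}). This is where the monotonicity of the bottom row is actually used and where the three cases genuinely differ; it collapses the unbounded family of decompositions to a bounded number of extremal ones, each certified by a basis element of $\C$ or $\D$ plus a constant number of points. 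The paper's concluding remarks note that this ``contiguity of h-line heights'' is exactly what fails for $2\times 3$ grids, which is further evidence that the step you deferred is the whole content of the theorem rather than routine bookkeeping. As written, your proposal sets up the right frame but does not prove the result.
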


Our approach makes use of an existing result, which although not originally presented in this way, can be cast in terms of grid classes. For permutation classes $\C$ and $\D$, the \emph{(horizontal) juxtaposition} of $\C$ and $\D$ is the $1\times 2$ grid class\ \tikz[scale=0.5,baseline=0.2cm-0.5ex]{\tgrid(2,1);\tclass(1,1){$\C$};\tclass(2,1){$\D$};}. Similarly, the \emph{vertical juxtaposition} of $\C$ and $\D$ is the $2\times 1$ grid class \tikz[scale=0.5,baseline=0.5cm-0.5ex]{\tgrid(1,2);\tclass(1,2){$\C$};\tclass(1,1){$\D$};}.

\begin{lemma}[\citealp{atkinson:restricted-perm:}]\label{lem-juxt-basis}
Whenever $\C$ and $\D$ are finitely based, so are the horizontal and vertical juxtapositions of $\C$ and $\D$.
\end{lemma}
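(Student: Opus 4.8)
The plan is to bound the length of the basis elements of the juxtaposition directly. Write $\C=\Av(B_1)$ and $\D=\Av(B_2)$ with $B_1,B_2$ finite, put $b=\max\{|\sigma|:\sigma\in B_1\cup B_2\}$, and let $\J$ denote the horizontal juxtaposition $\hjuxta{\C}{\D}$. It is immediate that $\J$ is closed downwards under $\le$, hence has a basis, and since there are only finitely many permutations of each length it suffices to prove that every basis element of $\J$ has length at most $2b$.

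First I would record the combinatorial shape of membership. Given $\pi$ of length $n$ and $0\le k\le n$, write $L_k$ and $R_k$ for the subpermutations of $\pi$ induced by its first $k$ and its last $n-k$ entries (ordered by position), so that $\pi\in\J$ exactly when $L_k\in\C$ and $R_k\in\D$ for some $k$. Since $\C,\D$ are downward closed and $L_k\le L_{k+1}$, $R_{k+1}\le R_k$, the predicate ``$L_k\notin\C$'' is nondecreasing in $k$ and ``$R_k\notin\D$'' is nonincreasing. Letting $k_1$ be the least $k$ with $L_k\notin\C$ and $k_2$ the greatest $k$ with $R_k\notin\D$ (with $k_1=n+1$, resp.\ $k_2=-1$, if no such $k$ exists), one obtains the criterion that $\pi\notin\J$ if and only if $k_1\le k_2+1$; in particular, when $\pi\notin\J$ both $k_1$ and $k_2$ are genuine indices in $\{0,\dots,n\}$.

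Now take a basis element $\mu\notin\J$ and apply the criterion. As $L_{k_1}\notin\C$ it contains a copy of some $\beta\in B_1$ on a position set $V$ with $\max V\le k_1$ and $|V|=|\beta|$; as $R_{k_2}\notin\D$ it contains a copy of some $\gamma\in B_2$ on a position set $W$ with $\min W\ge k_2+1\ge k_1$ and $|W|=|\gamma|$. Hence every position of $V$ is weakly left of every position of $W$, so $|V\cap W|\le 1$. Let $\mu'$ be the subpermutation of $\mu$ on the positions $V\cup W$: in $\mu'$ the copy of $\beta$ occupies an initial block of positions and the copy of $\gamma$ a final block, the two blocks meeting in at most one position. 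A short check on where a dividing line of $\mu'$ can fall then shows that, for every split, the left part contains $\beta$ (so is not in $\C$) or the right part contains $\gamma$ (so is not in $\D$); thus $\mu'\notin\J$, while $|\mu'|\le|\beta|+|\gamma|\le 2b$. Minimality of $\mu$ forces $\mu=\mu'$, whence $|\mu|\le 2b$. This proves $\J$ is finitely based; the vertical juxtaposition $\vjuxta{\C}{\D}$ is then handled by applying the symmetry $\pi\mapsto\pi^{-1}$, which swaps rows with columns, interchanges horizontal with vertical juxtapositions, and preserves the property of being finitely based.

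I expect the only slightly fiddly point to be the ``short check'' in the third paragraph — in particular making sure the single position possibly shared by the $\beta$- and $\gamma$-copies is accounted for correctly, and confirming that restricting to $V\cup W$ really does segregate the two copies into an initial and a final block of positions. The rest — the monotonicity observations, the membership criterion, and the concluding minimality argument — is routine.
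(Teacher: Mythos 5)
Your proof is correct. Note that the paper does not prove this lemma itself --- it is imported from \citet{atkinson:restricted-perm:} --- so there is no internal proof to compare against; your argument (locating the threshold indices $k_1$ and $k_2$, extracting a basis element of $\C$ lying weakly to the left of a basis element of $\D$ with at most one shared position, and invoking minimality to bound basis elements of $\hjuxta{\C}{\D}$ by $2b$) is essentially the standard one from the cited source. The ``short check'' you flag does go through in both the disjoint and the one-shared-position cases, and reducing the vertical juxtaposition to the horizontal one via $\pi\mapsto\pi^{-1}$ (which sends finitely based classes to finitely based classes) is fine.
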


For clarity, we occasionally write $\hjuxta \C\D$ for the horizontal juxtaposition\ \tikz[scale=0.5,baseline=0.2cm-0.5ex]{\tgrid(2,1);\tclass(1,1){$\C$};\tclass(2,1){$\D$};} (we do not need the corresponding vertical juxtaposition notation).

%
%
%
%
%
%
%
%
%
%
%

\section{Juxtapositions and relative bases}\label{sec-relative-bases}

In this section, we give a characterisation of $2\times 2$ grid classes of the form
\[\E = \tikz[scale=0.5,baseline=0.5cm-0.5ex]{\tgrid(2,2);\tclass(1,2){$\A$};\tclass(2,2){$\B$};\tclass(1,1){$\C$};\tclass(2,1){$\D$};}\]
where $\A$, $\B$, $\C$ and $\D$ are four fixed (but arbitrary) permutation classes. 

We begin by considering the following related class, formed by the horizontal juxtaposition of two vertical juxtapositions:
\[\F = \hjuxta{\tikz[scale=0.5,baseline=0.5cm-0.5ex]{\tgrid(1,2);\tclass(1,2){$\A$};\tclass(1,1){$\C$};}}{\tikz[scale=0.5,baseline=0.5cm-0.5ex]{\tgrid(1,2);\tclass(1,2){$\B$};\tclass(1,1){$\D$};}}.\]
Note that if $\A,\B,\C$ and $\D$ are finitely based, then by repeated application of Lemma~\ref{lem-juxt-basis} so too is $\F$. 

Clearly, $\E\subseteq \F$. We are interested in the basis of $\E$, which we can separate into two parts: those basis elements of $\E$ that lie within $\F$, and those basis elements of $\E$ that are not in $\F$. By minimality and since $\E\subseteq\F$, this latter set must also be basis elements of $\F$. The set of basis elements of $\E$ that are contained in $\F$ we call the \emph{relative basis} of $\E$ in $\F$, and we have the following observation.  

\begin{observation}\label{obs-rel-basis}
Let $\C$ and $\D$ be two permutation classes such that $\D$ finitely based, and $\C\subseteq\D$. Then $\C$ is finitely based if and only if the relative basis of $\C$ in $\D$ is finite. 
\end{observation}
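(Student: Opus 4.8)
The plan is to establish that the basis of $\C$ and the relative basis of $\C$ in $\D$ differ by only finitely many permutations, each of which lies in the basis of $\D$; the claimed equivalence is then immediate.

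First I would fix notation: write $B_\C$ and $B_\D$ for the bases of $\C$ and $\D$, so $B_\D$ is finite by hypothesis, and recall that the relative basis of $\C$ in $\D$ is exactly $R=\{\beta\in B_\C : \beta\in\D\}$. One implication needs no work: since $R\subseteq B_\C$, finiteness of $B_\C$ forces finiteness of $R$.

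For the converse I would partition $B_\C = R\cup\{\beta\in B_\C : \beta\notin\D\}$ and show the second piece is contained in $B_\D$. Given $\beta\in B_\C$ with $\beta\notin\D=\Av(B_\D)$, pick $\gamma\in B_\D$ with $\gamma\leq\beta$; since $\gamma\notin\D$ and $\C\subseteq\D$, also $\gamma\notin\C=\Av(B_\C)$, so there is $\delta\in B_\C$ with $\delta\leq\gamma\leq\beta$. As $B_\C$ is an antichain containing both $\delta$ and $\beta$, we conclude $\delta=\beta$, hence $\beta=\gamma\in B_\D$. Therefore $B_\C$ is the union of $R$ with a subset of the finite set $B_\D$, so $B_\C$ is finite if and only if $R$ is finite.

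There is no serious obstacle here: the single substantive step is the three-term sandwich $\delta\leq\gamma\leq\beta$ combined with the antichain property of $B_\C$, and this is exactly the reasoning already indicated in the surrounding discussion (that basis elements of the grid class lying outside the juxtaposition class are themselves basis elements of the latter). Everything else is bookkeeping about which minimal forbidden permutations do or do not belong to $\D$.
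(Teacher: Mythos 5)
Your proof is correct and follows the same route the paper indicates in the discussion immediately preceding the Observation: the basis elements of $\C$ that fall outside $\D$ must, by minimality and $\C\subseteq\D$, themselves be basis elements of $\D$, so the basis of $\C$ differs from the relative basis only by a subset of the finite set $B_\D$. Your three-term sandwich argument via the antichain property is a clean, explicit write-up of exactly this.
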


Consider any permutation $\pi$ in the set $\F\setminus \E$. Since $\pi$ lies in the juxtaposition class $\F$, we can write $\pi=\pi_1\pi_2$ with \[\pi_1\in 
\tikz[scale=0.5,baseline=0.5cm-0.5ex]{\tgrid(1,2);\tclass(1,2){$\A$};\tclass(1,1){$\C$};} \text{ and }\pi_2\in 
\tikz[scale=0.5,baseline=0.5cm-0.5ex]{\tgrid(1,2);\tclass(1,2){$\B$};\tclass(1,1){$\D$};}.\] 
We refer to the division line $v$ that separates $\pi_1$ from $\pi_2$ as a \emph{v-line}. Additionally, any horizontal division line in $\pi_1$ that demonstrates $\pi_1$ as a member of the vertical juxtaposition is called a \emph{left h-line} of $\pi$, and similarly any valid horizontal division line in $\pi_2$ is called a \emph{right h-line}. Thus, we can recognise $\pi\in\F$ by means of a \emph{division triple}, $(v,r,\ell)$, where $v$ is the v-line, $r$ the right h-line, and $\ell$ the left h-line.

The condition that $\pi\in\F\setminus\E$ can now be described as follows: for every division triple $(v,r,\ell)$ that recognises $\pi\in\F$, the right h-line $r$ and the left h-line $\ell$ cannot be at the same height. We use the symbol \squintA to denote the set of permutations in $\F$ which have a division triple $(v,r,\ell)$ where $\ell$ is no higher than $r$, and \squintB to denote those permutations which have a division where $\ell$ is no lower than $r$. Note that \squintA and \squintB are both in fact permutation classes, and also that $\F = \squintA \cup \squintB$.

Our main result of this section now follows. It shows in particular that $\pi\in\F\setminus\E$ cannot simultaneously lie in \squintA and \squintB, and hence the relative basis of $\E$ in $\F$ can be divided into two disjoint  parts: those that lie in \squintA and those that lie in \squintB.

\begin{lemma}\label{lem-squint-intersection}
Any $2\times 2$ grid class $\E=\twobytwo$ is equal to the intersection of the corresponding classes $\squintA$ and $\squintB$. That is, \[\E = \twobytwo = \squintA \cap \squintB\,.\]
\end{lemma}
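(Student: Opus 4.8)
The plan is to prove the two inclusions $\E \subseteq \squintA \cap \squintB$ and $\squintA \cap \squintB \subseteq \E$ separately, the first being essentially immediate and the second being the crux.

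For the easy direction, suppose $\pi \in \E = \twobytwo$. Then $\pi$ has a division triple $(v,r,\ell)$ recognising membership in $\F$ in which $r$ and $\ell$ are at the same height (namely, the common horizontal line of the $2 \times 2$ gridding). Since $\ell$ is then simultaneously no higher than $r$ and no lower than $r$, this same triple witnesses $\pi \in \squintA$ and $\pi \in \squintB$. Hence $\E \subseteq \squintA \cap \squintB$.

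For the reverse inclusion, suppose $\pi \in \squintA \cap \squintB$. So there is a division triple $(v_1, r_1, \ell_1)$ with $\ell_1$ no higher than $r_1$, and a division triple $(v_2, r_2, \ell_2)$ with $\ell_2$ no lower than $r_2$. The key observation I would use is that the v-line can be fixed: both triples can be taken to use the \emph{same} v-line. Indeed, moving the v-line only redistributes points between the left and right halves; if one chooses, say, the leftmost v-line that works in either gridding, one can argue that both the $\squintA$-witness and the $\squintB$-witness survive — the precise statement to nail down here is that for a fixed v-line $v$, the set of achievable left h-line heights and the set of achievable right h-line heights are each "intervals" (in the combinatorial sense of being achievable for all heights between two extremes), so that once $v$ is fixed we have a left h-line ranging over an interval $L_v$ of heights and a right h-line ranging over an interval $R_v$. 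Membership in $\squintA$ (with this $v$) says $\min L_v \le \max R_v$ after suitable interpretation of "same height," and membership in $\squintB$ says $\max L_v \ge \min R_v$; together with the interval property these force $L_v \cap R_v \neq \emptyset$, i.e. there is a common height for the left and right h-lines, which is exactly a $2 \times 2$ gridding of $\pi$, so $\pi \in \E$.

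The main obstacle, and the step deserving the most care, is justifying that one may use a common v-line for the two witnessing triples and that, for that fixed v-line, the admissible left- and right-h-line heights form intervals. This requires a short argument about how sliding a division line past a single point of $\pi$ changes which cells that point lies in, and checking that the relevant cell classes ($\A,\B$ above and $\C,\D$ below, which are downward closed) continue to accommodate the redistributed points. Once the "interval of admissible heights" lemma is in place, the conclusion $L_v \cap R_v \neq \emptyset$ is a one-line pigeonhole-style observation about overlapping intervals of integers (or of gaps between consecutive point-heights), and the inclusion $\squintA \cap \squintB \subseteq \E$ follows, completing the proof.
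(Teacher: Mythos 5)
Your overall architecture is sound, and the two correct ingredients you identify --- that for a fixed v-line $v$ the sets $L_v$ and $R_v$ of admissible left and right h-line heights are intervals, and that the inequalities $\min L_v \le \max R_v$ and $\max L_v \ge \min R_v$ force two nonempty intervals to meet --- are both true and are implicitly present in the paper's argument as well. The easy inclusion $\E \subseteq \squintA \cap \squintB$ is fine.

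The genuine gap is the step you yourself flag as the main obstacle: the existence of a \emph{single} v-line that simultaneously witnesses membership in \squintA\ and in \squintB. This is really the whole content of the lemma, and the route you propose (take the leftmost v-line that works for either gridding and argue that both witnesses survive) does not go through, because neither condition is monotone in the position of the v-line. Moving $v$ to the left shrinks the left block and enlarges the right block, so $L_v$ grows while $R_v$ shrinks: $\max L_v$ can only increase, but so can $\min R_v$, so the condition $\max L_v \ge \min R_v$ for \squintB\ can be destroyed; symmetrically, moving $v$ to the right can destroy the condition $\min L_v \le \max R_v$ for \squintA. Hence neither extremal choice of $v$ is guaranteed to work, and an intermediate-value argument also fails because these quantities can jump by arbitrarily many heights as $v$ slides past a single point. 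The paper's proof is structured precisely to avoid choosing a common v-line: it fixes a division $(v,r,\ell)$ witnessing \squintA\ with $r$ lowest and $\ell$ highest, takes a division $(v',r',\ell')$ witnessing \squintB\ with $v'$ at or to the left of $v$ (the other case by $180^\circ$ symmetry), observes that $r'$ must be at least as high as $r$, and then shows that the left h-line at the height of $r'$ is already valid for $v'$ because the top-left region it determines is \emph{contained} in the top-left region of the $(v,r,\ell)$-division, which lies in $\A$. That containment of regions across two \emph{different} v-lines is the idea your proposal is missing; to complete your version you would need to supply an argument of exactly this kind to manufacture the common $v$.
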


\begin{proof}
First, it is clear that $\twobytwo\subseteq \squintA \cap \squintB$, so suppose that we have a permutation $\pi$ in $\squintA \cap \squintB$.

\begin{figure}
\centering
\begin{tikzpicture}[scale=0.4]
\draw (0,0) rectangle (10,10);
\draw [dashed] (4,0) -- (4,10);
\draw [dashed] (0,7) -- (4,7);
\draw [dashed] (4,6) -- (10,6);
\draw (6,0) -- (6,10);
\draw (0,3) -- (6,3);
\draw (6,5) -- (10,5);
\draw [gray, ->] (.5,7.5) -- (.5,7);
\draw [gray, ->] (9.5,5.5) -- (9.5,5);
\node[fill=none,draw=none] (r) at (10,5) [label=right:$r$] {};
\node[fill=none,draw=none] (r) at (10,6) [label=right:$r'$] {};
\node[fill=none,draw=none] (r) at (0,3) [label=left:$\ell$] {};
\node[fill=none,draw=none] (r) at (0,7) [label=left:$\ell'$] {};
\node[fill=none,draw=none] (r) at (6,10) [label=above:$v$] {};
\node[fill=none,draw=none] (r) at (4,10) [label=above:$v'$] {};

\end{tikzpicture}
\caption{The relationship between the division $(v,r,\ell)$ and $(v',r',\ell')$ in the proof of Lemma~\ref{lem-squint-intersection}. The small arrows indicate that the corresponding division lines have been chosen to be extremal in the direction specified by the arrows.}
\label{fig-squintgrids}
\end{figure}
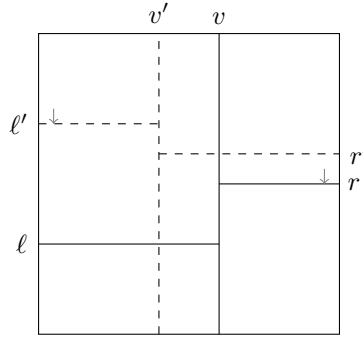

Consider $\pi$ first as a member of \squintA. There exists at least one division triple $(v,r,\ell)$ which recognises this, and we choose any valid v-line $v$, together with the lowest right h-line $r$ and the highest left h-line $\ell$. Note in particular that for any right h-line that is lower than $r$, there must exist a basis element in the top right cell. If $\ell$ and $r$ coincide, then we have $\pi\in \twobytwo$ and we are done, so we may assume that $\ell$ is strictly lower than $r$.

Next, consider $\pi$ as an element of \squintB. We pick a division $(v',r',\ell')$ by first choosing any v-line $v'$ which either coincides with $v$ or lies further to the left (the case where $v'$ is to the right of $v$ will follow upon rotating the picture by $180^\circ$). Next choose any valid $r'$, 
noting that $r'$ must be at least as high as $r$ to avoid introducing a basis element into the top right cell. Finally, choose $\ell'$ to be as low as possible, subject to the division triple $(v',r',\ell')$ remaining a valid division for membership of \squintB (see Figure~\ref{fig-squintgrids}). We claim that $\ell'$ is at the same height as $r'$.

Suppose, for a contradiction, that $\ell'$ lies strictly above $r'$, and let $\ell''$ be the left h-line that has the same height as $r'$. Since the division triple $(v',r',\ell'')$ does not witness $\pi\in \squintB$ (but $(v',r',\ell')$ does), there must exist some basis element in the top left region defined by $(v',r',\ell'')$. However, this region is contained in the top left region defined by $(v,r,\ell)$, so this is impossible. 

Thus $\ell'$ has the same height as $r'$, and $(v',r',\ell')$ is a division triple that recognises $\pi\in\squintB$, and hence $\pi\in\twobytwo$.
\end{proof}

\section{Main results}\label{sec-main}

We are ready to start proving our three main results.

\begin{lemma}\label{lem-casea}
For finitely based classes $\C$ and $\D$, the class
\[\E=
\tikz[scale=0.5,baseline=0.5cm-0.5ex]{
\tgrid(2,2);
\tup(1,1);
\tup(2,1);
\tclass(1,2){$\C$};
\tclass(2,2){$\D$};
}\]
is finitely based.\end{lemma}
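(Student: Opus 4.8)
The plan is to push $\E$ through the machinery of Section~\ref{sec-relative-bases} and then control what remains by examining an extremal gridding.

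First I would set $\F$ to be the class of Section~\ref{sec-relative-bases} with $\A=\C$, $\B=\D$ and both lower cells equal to $\Av(21)$; that is, $\F$ is the horizontal juxtaposition of the vertical juxtaposition of $\C$ over $\Av(21)$ with the vertical juxtaposition of $\D$ over $\Av(21)$, and $\E\subseteq\F$. Since $\C$, $\D$ and $\Av(21)$ are finitely based, three applications of Lemma~\ref{lem-juxt-basis} show that $\F$ is finitely based, so by Observation~\ref{obs-rel-basis} it suffices to prove that the relative basis of $\E$ in $\F$ is finite. By Lemma~\ref{lem-squint-intersection} we have $\E=\squintA\cap\squintB$, and, as remarked after that lemma, this relative basis is the disjoint union of the basis elements of $\E$ lying in $\squintA$ and those lying in $\squintB$. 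A $180^{\circ}$ rotation sends $\E$ to a grid class with two matching increasing cells in one row (now the top row), with $\C$ and $\D$ replaced by their reverse-complements, and it interchanges $\squintA$ with $\squintB$; since the argument below is symmetric under swapping the two rows (it uses only that the two cells of the monotone row are both increasing), it is enough to bound the size of a basis element $\pi$ of $\E$ that lies in $\squintA$.

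So fix such a $\pi$. Every proper subpermutation of $\pi$ lies in $\E$, whereas $\pi\notin\E=\squintA\cap\squintB$, so $\pi\notin\squintB$; hence no division triple of $\pi$ has its two horizontal lines level, and none has the left h-line above the right one, so \emph{every} division triple $(v,r,\ell)$ of $\pi$ has $\ell$ strictly below $r$. Fix one such triple, choosing $v$ as far to the left as possible, then $r$ as low as possible and $\ell$ as high as possible. Let $M$ be the set of points of $\pi$ whose height lies strictly between $\ell$ and $r$, and let $M_L$ and $M_R$ be its parts to the left and right of $v$, so that $M_L$ sits in the $\C$-cell and $M_R$ in the lower-right increasing cell. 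Then $M\neq\varnothing$ (otherwise the two h-lines could be slid together into the empty band, witnessing $\pi\in\E$), and in fact $M_L\neq\varnothing$ and $M_R\neq\varnothing$: if $M_R=\varnothing$ then the right h-line could be lowered from $r$ to $\ell$ without changing any cell's contents, producing a division triple with level lines and hence $\pi\in\squintB$; symmetrically, if $M_L=\varnothing$ the left h-line could be raised to $r$. Choose $a\in M_L$ of least height and $b\in M_R$ of greatest height. Because $\ell$ was chosen as high as possible, raising it just past $a$ must spoil the increasing lower-left cell, and as that cell lay entirely below $a$ this can only be because some point $a'$ of it lies to the right of $a$; thus $a$ is above and to the left of $a'$. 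Because $r$ was chosen as low as possible, lowering it just past $b$ must force a forbidden pattern for $\D$ into the upper-right cell, so there is a copy of some basis element $\beta$ of $\D$ whose least point is $b$ and whose remaining $|\beta|-1$ points lie in the $\D$-cell. Pushing $v$ as far left as possible likewise yields one further bounded forbidden configuration straddling $v$.

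The remaining step, which is the heart of the matter, is to assemble these bounded pieces into a single subpermutation $\tau\le\pi$ whose size is bounded purely in terms of the largest basis element of $\C$ or of $\D$, and for which $\tau\notin\E$. Since every proper subpermutation of $\pi$ lies in $\E$, this forces $\tau=\pi$, bounding $|\pi|$; taking the maximum over all basis elements of $\C$ and of $\D$ (and over the two rotations) then gives a uniform bound on the sizes of basis elements of $\E$, which with Observation~\ref{obs-rel-basis} completes the proof. Verifying $\tau\notin\E$ is the hard part: one must check, over every way of assigning the points of $\tau$ to the four cells (with $\C$ and $\D$ on top and $\Av(21)$ in each lower cell), that the descent $\{a,a'\}$, the point $b$, and the $\beta$-copy cannot all be accommodated at once -- the decisive feature being that the two lower cells are \emph{both} increasing, so a band point cannot be absorbed into the lower row without forming an ascent with another band point. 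This is a finite but fiddly case analysis, and it, rather than the reductions above, is where the real content of the lemma lies.
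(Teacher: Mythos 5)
Your reduction is the same as the paper's: form $\F$ as the juxtaposition of the two vertical juxtapositions, invoke Lemma~\ref{lem-juxt-basis} and Observation~\ref{obs-rel-basis} to reduce to finiteness of the relative basis, and split that basis via Lemma~\ref{lem-squint-intersection} into the elements lying in $\squintA$ and those lying in $\squintB$. However, there is a genuine gap, and it sits exactly where you yourself say ``the heart of the matter'' and ``the real content of the lemma'' lie: you do not carry out that step. All the points you extract ($a$, $a'$, $b$, the copy of $\beta$) are chosen relative to a \emph{single} division triple, the one with $v$ leftmost. To certify $\pi\notin\E$ you must obstruct \emph{every} division triple of the witness $\tau$ (including triples whose v-line is not a valid v-line of $\pi$, since deleting points creates new griddings), and your configuration does not do this: for a v-line further to the right, part of your copy of $\beta$ may fall into the left column, so it no longer forces the right h-line upwards there, and the gap between $\ell$ and $r$ can close. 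The paper's proof supplies precisely the missing ingredient: it takes the two extremal triples $(v_L,r_L,\ell_L)$ and $(v_R,r_R,\ell_R)$ and proves that, as the v-line varies, the highest available left h-line takes only the two heights $\ell_L$ and $\ell_R$ (this is where the increasing lower cells are exploited, via the four points $a,b,c,d$ of Figure~\ref{fig-highest-left-lines}); it then needs only two basis elements of $\D$, one for each regime of the v-line, to pin every right h-line strictly above the corresponding left one. Without some such uniform control over all v-lines you have not even identified a candidate $\tau$, so the ``finite but fiddly case analysis'' you defer cannot be set up, let alone completed.

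A secondary error: a $180^\circ$ rotation does not interchange $\squintA$ and $\squintB$ --- it preserves each of them, because it swaps left with right \emph{and} low with high, and the two swaps cancel in the condition ``$\ell$ no higher than $r$''. The symmetries that do swap them (reverse, or complement) do not map $\E$ to a class of the same form: they either make the bottom row decreasing or move the monotone row to the top. So your appeal to symmetry for the $\squintB$ case is unjustified; the paper instead gives a separate argument for $\squintB$ with a genuinely different structure (there $r_L$ and $r_R$ are forced to coincide; see Figure~\ref{fig-squintb}), and reserves the symmetry argument for Lemma~\ref{lem-caseb}, whose class really is invariant under a $\squintA$--$\squintB$ swapping symmetry.
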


\begin{proof}
First, let $B$ denote the relative basis of $\E$ inside the juxtaposition
\[\F=
\hjuxta{\tikz[scale=0.5,baseline=0.5cm-0.5ex]{
\tgrid(1,2);
\tup(1,1);
\tclass(1,2){$\C$};
}}{\tikz[scale=0.5,baseline=0.5cm-0.5ex]{
\tgrid(1,2);
\tup(1,1);
\tclass(1,2){$\D$};
}}.\]
Since $\F$ is finitely based, by Observation~\ref{obs-rel-basis} it suffices to show that $B$ is finite. By Lemma~\ref{lem-squint-intersection} and the comments preceding it, any $\pi\in B$ lies in exactly one of \squintA or \squintB. Consider first the case where $\pi\in \squintA$. We will identify a bounded number of points in $\pi$ that demonstrate $\pi\not\in\E$.

We begin by identifying two division triples, $(v_L,r_L,\ell_L)$ and $(v_R,r_R,\ell_R)$: $v_L$ is the leftmost v-line recognising $\pi\in\squintA$, and $v_R$ is the rightmost such v-line. Subject to these choices, we pick $\ell_L$ and $\ell_R$ to be as high as possible, and $r_L$ and $r_R$ as low as possible.

We now prove the following claim: if $(v,r,\ell)$ is any other division triple recognising $\pi\in\squintA$ where the left h-line $\ell$ is chosen as high as possible, then $\ell$ is at the same height as either $\ell_L$ or $\ell_R$. 

If $\ell_L$ and $\ell_R$ are at the same height, the claim follows immediately, so we can assume that $\ell_L$ is strictly higher than $\ell_R$. The situation is as depicted in Figure~\ref{fig-highest-left-lines}: we identify four points, $a$, $b$, $c$ and $d$, which are distinct (except possibly $b=c$) and which form the copies of 21 that define $\ell_L$ and $\ell_R$. Note that $a$ and $c$ lie immediately above $\ell_L$ and $\ell_R$, and, except that the relative positions of $a$ and $c$ can be interchanged providing $b\neq c$, the points must be arranged in the way shown in Figure~\ref{fig-highest-left-lines} in order that $\pi\in\squintA$. For the same reason, all other points of $\pi$ that lie in the marked rectangular regions 1, 2, 3 and 4 (defined by the bounding dotted and dashed lines) in Figure~\ref{fig-highest-left-lines} must lie on the diagonal segments indicated.

\begin{figure}
\centering
\begin{tikzpicture}[scale=0.4]
\draw [gray, dashed] (3,0) -- (3,10);
\draw [gray, dashed] (0,7) -- (3,7);
\draw [gray, dashed] (3,9) -- (10,9);
\draw [gray, dotted] (7,0) -- (7,10);
\draw [gray, dotted] (0,3) -- (7,3);
\draw [gray, dotted] (7,5) -- (10,5);
\draw [gray, ->] (3.5,2) -- (3,2);
\draw [gray, ->] (6.5,2) -- (7,2);
\draw [gray, ->] (.2,6.5) -- (.2,7);
\draw [gray, ->] (.2,2.5) -- (.2,3);
\draw [gray, ->] (9.8,9.5) -- (9.8,9);
\draw [gray, ->] (9.8,5.5) -- (9.8,5);
\node at (.5,7.3) [label=45:{\small $a$}] {};
\node (b) at (2.5, 6.3) [label=left:{\small $b$}] {};
\node (c) at (1,3.3) [label=left:{\small $c$}] {};
\node (d) at (5,2.5) [label=right:{\small $d$}] {};
\draw (b) -- (c);
\draw (d) -- (4.45,1.4);
\draw (0,0) -- (.7,1.4);
\draw (5.3,3.5) -- (6.5,5.9);
\node at (10,9) [invis_nd,label=right:$r_L$] {};
\node at (10,5) [invis_nd,label=right:$r_R$] {};
\node at (0,7) [invis_nd,label=left:$\ell_L$] {};
\node at (0,3) [invis_nd,label=left:$\ell_R$] {};
\node at (3,10) [invis_nd,label=above:$v_L$] {};
\node at (7,10) [invis_nd,label=above:$v_R$] {};
\node [fill=none,inner sep=1pt] at (6,8) {\tiny $1$};
\node [fill=none,inner sep=1pt] at (0,5) {\tiny $2$};
\node [fill=none,inner sep=1pt] at (2,0) {\tiny $3$};
\node [fill=none,inner sep=1pt] at (6,0) {\tiny $4$};
\end{tikzpicture}
\caption{The relationships between the division triples $(v_L,r_L,\ell_L)$ and $(v_R,r_R,\ell_R)$, the points defining $\ell_L$ and $\ell_R$, and the restrictions on the placement of points in the four rectangular regions 1---4.}
\label{fig-highest-left-lines}
\end{figure}
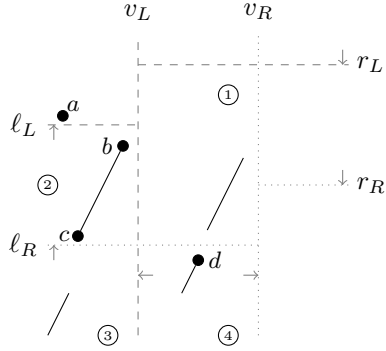

Consider any division triple $(v,r,\ell)$ recognising $\pi\in\squintA$ where $\ell$ is chosen as high as possible. If $v$ lies further left than all points in the region labelled 4 in Figure~\ref{fig-highest-left-lines}, then we can choose $\ell$ at the same height as $\ell_L$. On the other hand, if any point from region 4 lies to the left of $v$, then $c$ must lie above $\ell$, and thus $\ell$ is at the same height as $\ell_R$. This completes the claim.

We can now identify the following bounded collection of points of $\pi$: (i) a basis element of \tikz[scale=0.4,baseline=0.4cm-0.5ex]{
\tgrid(1,2);
\tup(1,1);
\tclass(1,2){\footnotesize$\C$};}
which defines $v_R$, (ii) a basis element of
\tikz[scale=0.4,baseline=0.4cm-0.5ex]{
\tgrid(1,2);
\tup(1,1);
\tclass(1,2){\footnotesize$\D$};}
to define $v_L$, and (iii) at most 4 points $a,b,c$ and $d$ defining the two left h-lines $\ell_R$ and $\ell_L$. 

It remains to identify a bounded number of points to ensure that any division triple $(v,r,\ell)$ recognising $\pi\in\squintA$ has $\ell$ strictly lower than $r$. For this, it suffices to consider only the \emph{extremal} triples $(v,r,\ell)$ where $\ell$ is as high as possible, and $r$ is as low as possible. We identify the extremal triple $(v_X,r_X,\ell_X)$ where the v-line $v_X$ is chosen to lie immediately to the left of all points in region 4 of Figure~\ref{fig-highest-left-lines}. By the earlier claim, $\ell_X$ has the same height as $\ell_L$. The lowest right h-line $r_X$ must lie strictly above $\ell_X$, and is defined by a basis element of $\D$ to the right of $v_X$, with one point lying immediately below $r_X$. Observe that for any extremal triple $(v,r,\ell)$ where $v$ lies to the left of $v_X$, we have that $\ell$ is at the same height as $\ell_X$, and $r$ can be no lower than $r_X$. In particular, since $\pi$ as a basis element is minimally not in $\E$, if $r$ is higher than $r_X$ then it is because of points in $\pi$ that we have already identified.

Similarly, the position of the line $r_R$ is fixed by a basis element of $\D$ to the right of $v_R$. For any extremal triple $(v,r,\ell)$ where $v$ is further right than $v_X$, we know that $\ell$ is at the same height as $\ell_R$, and $r$ can be no lower than $r_R$ (because of the basis element of $\D$). Thus, again by the minimality of $\pi$, if $r$ is strictly higher than $r_R$ it is because of points that we have already identified.

From this, we conclude that if $\pi\in\squintA$ is a basis element of $\E$ relative to $\F$ then the number of points in $\pi$ is bounded, as $\pi$ comprises the points identified in (i), (ii) and (iii) above, and by at most two basis elements of $\D$. 

\begin{figure}
\centering
\begin{tabular}{ccc}
\begin{tikzpicture}[scale=0.4]
\draw [gray, dashed] (3,0) -- (3,10);
\draw [gray, dashed] (0,3) -- (3,3);
\draw [gray, dashed] (3,3) -- (10,3);
\draw [gray, dotted] (7,0) -- (7,10);
\draw [gray, dotted] (0,7) -- (7,7);
\draw [gray, dotted] (7,5) -- (10,5);
\draw [gray, ->] (3.5,9) -- (3,9);
\draw [gray, ->] (6.5,9) -- (7,9);
\draw [gray, ->] (.2,7.5) -- (.2,7);
\draw [gray, ->] (7.2,2.5) -- (7.2,3);
\draw [gray, ->] (7.2,4.5) -- (7.2,5);
\node (a) at (4.3,3.3) {};
\node (b) at (5.5,6.7) {};
\node (c) at (9.5,4.7) {};
\node (d) at (8,5.3) {};
\node (e) at (8.3,2.7) {};
\draw (a) -- (b);
\draw (c) -- (8.7,3.3);
\draw (4,1.7) -- (3.3,1);
\draw (0,0) -- (2.7,.7);
\draw (e) -- (7.3,2);
\node at (10,3) [invis_nd,label=right:$r_L$] {};
\node at (10,5) [invis_nd,label=right:$r_R$] {};
\node at (0,7) [invis_nd,label=left:{$\ell_R$}] {};
\node at (0,3) [invis_nd,label=left:{$\ell_L$}] {};
\node at (3,10) [invis_nd,label=above:$v_L$] {};
\node at (7,10) [invis_nd,label=above:$v_R$] {};  
\node [invis_nd] at (1.5,5) {$\varnothing$};
\end{tikzpicture}
&\rule{10pt}{0pt}&
\begin{tikzpicture}[scale=0.4]
\draw [gray, dashed] (3,0) -- (3,10);
\draw [gray, dashed] (0,5) -- (3,5);
\draw [gray, dashed] (3,4) -- (10,4);
\draw [gray, dotted] (7,0) -- (7,10);
\draw [gray, dotted] (0,7) -- (7,7);
\draw [gray, dotted] (7,4) -- (10,4);
\draw [gray, ->] (3.5,2) -- (3,2);
\draw [gray, ->] (6.5,2) -- (7,2);
\draw [gray, ->] (.2,7.5) -- (.2,7);
\draw [gray, ->] (.2,5.5) -- (.2,5);
\draw [gray, ->] (7.2,3.5) -- (7.2,4);
\node (a) at (2.5,4.7) {};
\node (b) at (5.5,6.7) {};
\node (c) at (9.5,3.7) {};
\node (d) at (8,4.3) {};
\draw (a) -- (0,0);
\draw (b) -- (3.5,5.3);
\draw (c) -- (7.3,0);
\node at (10,4) [invis_nd,label=right:{$r_L=r_R$}] {};
\node at (0,5) [invis_nd,label=left:{$\ell_L$}] {};
\node at (0,7) [invis_nd,label=left:{$\ell_R$}] {};
\node at (3,10) [invis_nd,label=above:$v_L$] {};
\node at (7,10) [invis_nd,label=above:$v_R$] {};
\node [invis_nd] at (5,1.5) {$\varnothing$};
\node [invis_nd] at (1.5,6) {$\varnothing$};
\end{tikzpicture}
\end{tabular}
\caption{The relationships between the division triples $(v_L,r_L,\ell_L)$ and $(v_R,r_R,\ell_R)$ when $\pi\in\squintB$. On the left, if $r_L$ and $r_R$ are at different heights, then $\ell_L$ is at the same height as $r_L$. On the right, if $r_L$ and $r_R$ are at the same height, then the points defining $\ell_L$ guarantee $\pi\not\in\E$ for every triple $(v,r,\ell)$ recognising $\pi\in\squintB$.}
\label{fig-squintb}
\end{figure}
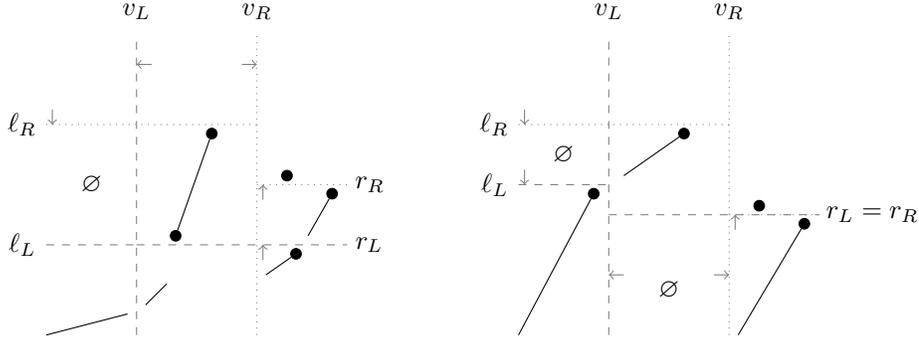
The argument for a basis element $\pi$ that lies in \squintB\ is similar, and we omit some of the details. The process begins by identifying the leftmost and rightmost v-lines $v_L$ and $v_R$, and the corresponding highest right h-lines $r_L$ and $r_R$. The left hand picture in Figure~\ref{fig-squintb} illustrates that $r_L$ and $r_R$ cannot have different heights (else $\pi\in\twobytwo$\,). In the right hand picture of Figure~\ref{fig-squintb}, the points forming a basis element of $\C$ that defines the line $\ell_L$ ensures that in any extremal triple $(v,r,\ell)$, $r$ is lower than $\ell$. Thus $\pi$ consists of (i) a basis element of \tikz[scale=0.4,baseline=0.4cm-0.5ex]{
\tgrid(1,2);
\tup(1,1);
\tclass(1,2){\footnotesize$\C$};}
which defines $v_R$, (ii) a basis element of
\tikz[scale=0.4,baseline=0.4cm-0.5ex]{
\tgrid(1,2);
\tup(1,1);
\tclass(1,2){\footnotesize$\D$};}
to define $v_L$, (iii) a copy of $21$ to define $r_R$, and (iv) a basis element of $\C$ to define $\ell_L$.
\end{proof}

A similar approach, of bounding the number of possible left and right h-lines, can be applied for the other two cases, so we only sketch the proofs.

\begin{lemma}\label{lem-caseb}
For finitely based classes $\C$ and $\D$, the class
\[\E=
\tikz[scale=0.5,baseline=0.5cm-0.5ex]{
\tgrid(2,2);
\tup(1,1);
\tdown(2,1);
\tclass(1,2){$\C$};
\tclass(2,2){$\D$};
}\]
is finitely based.
\end{lemma}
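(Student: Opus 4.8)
The plan is to imitate the proof of Lemma~\ref{lem-casea}. Put
\[\F=
\hjuxta{\tikz[scale=0.5,baseline=0.5cm-0.5ex]{
\tgrid(1,2);
\tup(1,1);
\tclass(1,2){$\C$};
}}{\tikz[scale=0.5,baseline=0.5cm-0.5ex]{
\tgrid(1,2);
\tdown(1,1);
\tclass(1,2){$\D$};
}},\]
which is finitely based by Lemma~\ref{lem-juxt-basis}, since $\C$, $\D$, $\Av(21)$ and $\Av(12)$ all are. By Observation~\ref{obs-rel-basis} it suffices to bound the size of the relative basis $B$ of $\E$ in $\F$, and by Lemma~\ref{lem-squint-intersection} every $\pi\in B$ lies in exactly one of $\squintA$ or $\squintB$.

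Before treating these two cases, observe that, unlike in Lemma~\ref{lem-casea}, they are here related by a symmetry. Reflecting a permutation in a vertical axis is an automorphism of the containment order, and it carries $\E$ and $\F$ built from $(\C,\D)$ to the grid class and juxtaposition of the same shape built from $(\D^{r},\C^{r})$, where $\mathcal{X}^{r}$ denotes the class of reverses of the members of $\mathcal{X}$. Since this reflection swaps the left and right parts of a member of $\F$, it swaps left and right h-lines, and hence carries $\squintA$ to $\squintB$. As $\D^{r}$ and $\C^{r}$ are again finitely based, it is therefore enough to prove that $B\cap\squintA$ is finite for every choice of finitely based $\C$ and $\D$; applying this to the reflected grid then handles $B\cap\squintB$.

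So suppose $\pi\in B\cap\squintA$. As in Lemma~\ref{lem-casea}, let $v_L$ and $v_R$ be the leftmost and rightmost v-lines occurring in a division triple recognising $\pi\in\squintA$, and for each pick the highest left h-line and lowest right h-line, obtaining $(v_L,r_L,\ell_L)$ and $(v_R,r_R,\ell_R)$. Because the bottom-left cell is $\Av(21)$ in both lemmas, the left h-line is constrained by $\C$ in exactly the same way, so the argument of Lemma~\ref{lem-casea}, together with the analogue of Figure~\ref{fig-highest-left-lines}, shows that any division triple recognising $\pi\in\squintA$ with its left h-line taken as high as possible has that line at the height of $\ell_L$ or $\ell_R$; these heights are certified by at most four points forming copies of $21$, plus the basis elements of the two vertical juxtapositions that pin $v_L$ and $v_R$. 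The one genuinely new ingredient is the right h-line, which now separates $\D$ from a \emph{decreasing} sequence: the role played in Lemma~\ref{lem-casea} by ``a basis element of $\D$ with a point immediately below the line'' is now played by a basis element of $\D$ together with one or two points of the decreasing bottom-right cell that prevent the line from being lowered (an occurrence of $12$ whose larger entry would be forced below the line). With these certificates available, the right h-line of an extremal triple is bounded just as before: for extremal triples whose v-line lies left of a distinguished v-line $v_X$ it is no lower than the line pinned near $v_X$, for those whose v-line lies further right it is no lower than the line pinned near $v_R$, and minimality of $\pi$ forces any strictly higher right h-line to be witnessed by points already chosen. Hence $\pi$ is supported on boundedly many points, so $B\cap\squintA$ is finite; by the reflection argument, $B\cap\squintB$ is finite too, and therefore so is $B$.

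I expect the main obstacle to be purely geometric: one must redraw the region figures --- the analogues of Figures~\ref{fig-highest-left-lines} and~\ref{fig-squintb} --- with the bottom-right cell decreasing rather than increasing, and check that the points of $\pi$ lying in those regions remain confined to a bounded family of monotone diagonal and antidiagonal segments, so that no unbounded ``free'' region can appear and the point count is genuinely bounded. Once the right picture is drawn the counting should be routine: replacing an increasing bottom-right cell by a decreasing one only changes which two-point pattern ($12$ rather than $21$) certifies that a right h-line cannot be lowered, not the existence of such a bounded certificate.
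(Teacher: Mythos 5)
Your overall strategy coincides with the paper's: pass to the finitely based juxtaposition $\F$, use Observation~\ref{obs-rel-basis} and Lemma~\ref{lem-squint-intersection} to split the relative basis between \squintA\ and \squintB, dispose of \squintB\ by the reflection symmetry (the paper does exactly this, just without spelling out the reflection), and then bound the heights of the extremal h-lines for $\pi\in\squintA$. The difficulty is that the one place where case~(b) genuinely differs from case~(a) is precisely the step you defer to a ``redraw the figures and check'' remark, and your account of what that check will reveal is wrong on both counts. For the left h-lines, you import the two-height conclusion of Lemma~\ref{lem-casea} on the grounds that ``the left h-line is constrained by $\C$ in exactly the same way''; but the argument pinning $\ell$ to one of two heights in Lemma~\ref{lem-casea} depends on the bottom-\emph{right} cell as well (region~4 of Figure~\ref{fig-highest-left-lines} lies in the bottom-right cell of the $v_L$-division), so it cannot be cited unchanged. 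What actually happens when that cell is decreasing is simpler: the point $c$ just below $r_R$ and to the right of $v_R$ forms a $12$ with any point between $v_L$ and $v_R$ below $\ell_R$, and such a $12$ would sit inside the decreasing bottom-right cell of the $v_L$-division; hence that region is empty, the copy of $21$ defining $\ell_R$ lies entirely to the left of $v_L$, and \emph{every} highest left h-line is at the single height $\ell_L=\ell_R$ (Figure~\ref{fig-caseb}). Your weaker claim is true only because this stronger one is, and your stated justification for it does not establish it.

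For the right h-line, your ``genuinely new ingredient''---extra points of the decreasing cell certifying ``an occurrence of $12$ whose larger entry would be forced below the line''---rests on a misconception. Lowering a right h-line only ever moves points \emph{out of} the bottom-right cell and \emph{into} the $\D$-cell, so the sole obstruction to lowering it is a basis element of $\D$, exactly as in case~(a); the monotonicity type of the bottom-right cell is irrelevant to this. The extra points are harmless for the counting but need not exist and are not what pins the line. Combined with $\ell_L=\ell_R$, a single basis element of $\D$ defining $r_R$ suffices, which is why the paper's certificate list for this lemma is shorter than the one you propose. In short: right framework, but the geometric core of the lemma is left unverified and the hints you give for completing it point in the wrong direction.
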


\begin{proof}[(sketch)]
We need only consider relative basis elements of $\E$ that lie in \squintA, as the argument for \squintB\ is symmetric. Thus, consider a basis element $\pi\in\squintA$ of $\E$.

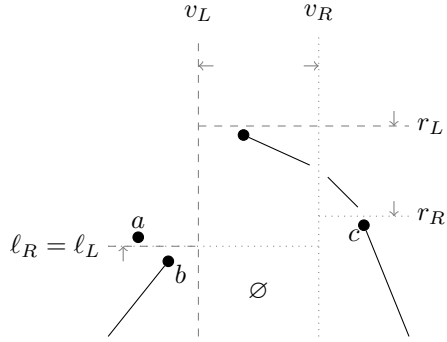
\begin{figure}
\centering
\begin{tikzpicture}[scale=0.4]
\draw [gray, dashed] (3,0) -- (3,10);
\draw [gray, dashed] (0,3) -- (3,3);
\draw [gray, dashed] (3,7) -- (10,7);
\draw [gray, dotted] (7,0) -- (7,10);
\draw [gray, dotted] (0,3) -- (7,3);
\draw [gray, dotted] (7,4) -- (10,4);
\draw [gray, ->] (3.5,9) -- (3,9);
\draw [gray, ->] (6.5,9) -- (7,9);
\draw [gray, ->] (.5,2.5) -- (.5,3);
\draw [gray, ->] (9.5,4.5) -- (9.5,4);
\draw [gray, ->] (9.5,7.5) -- (9.5,7);
\node (a) at (8.5,3.7) [label=-135:$c$] {};
\node (b) at (4.5,6.7) {};
\node (c) at (2,2.5) [label=-45:$b$] {};
\node (d) at (1,3.3) [label=90:$a$] {};
\draw (a) -- (10,0);
\draw (b) -- (6.7,5.7);
\draw (8.3,4.3) -- (7.3,5.3);
\draw (0,0) -- (c);
\node at (10,7) [invis_nd,label=right:$r_L$] {};
\node at (10,4) [invis_nd,label=right:$r_R$] {};
\node at (0,3) [invis_nd,label=left:{$\ell_R=\ell_L$}] {};
\node at (3,10) [invis_nd,label=above:$v_L$] {};
\node at (7,10) [invis_nd,label=above:$v_R$] {};
\node [invis_nd] at (5,1.5) {$\varnothing$};
\end{tikzpicture}
\caption{The left h-line $\ell_R$ is defined by the points $a$ and $b$ which form a copy of 21. Both $a$ and $b$ must lie to the left of $v_L$, so this also defines $\ell_L$.}
\label{fig-caseb}
\end{figure}

Define the division triples $(v_R,r_R,\ell_R)$ and $(v_L,r_L,\ell_L)$ recognising $\pi\in\squintA$ by choosing $v_R$ to be the rightmost v-line, and $v_L$ the leftmost, and then selecting $r_L$ and $r_R$ as low as possible, and $\ell_L$ and $\ell_R$ as high as possible.

We claim that $\ell_R$ and $\ell_L$ have the same height. In Figure~\ref{fig-caseb}, the point $c$ which defines the line $r_R$, forces the region below $\ell_R$ and between $v_L$ and $v_R$ to be empty. Consequently, the pair of points $a$ and $b$ (which forms a copy of 21 and hence defines the height of $\ell_R$) must lie to the left of $v_L$. This means that $a$ and $b$ also define the highest position of \emph{every} left h-line $\ell$ in a division triple $(v,r,\ell)$ recognising $\pi\in\squintA$.

The proof concludes by noting that we can demonstrate $\pi\not\in\E$ by the following points: (i) a basis element of \tikz[scale=0.4,baseline=0.4cm-0.5ex]{
\tgrid(1,2);
\tup(1,1);
\tclass(1,2){\footnotesize$\C$};}
which defines $v_R$, (ii) a basis element of
\tikz[scale=0.4,baseline=0.4cm-0.5ex]{
\tgrid(1,2);
\tdown(1,1);
\tclass(1,2){\footnotesize$\D$};}
to define $v_L$, (iii) a copy of $21$ to define $\ell_R$, and (iv) a basis element of $\D$ to define $r_R$.
\end{proof}

\begin{lemma}\label{lem-casec}
For finitely based classes $\C$ and $\D$, the class
\[\E=
\tikz[scale=0.5,baseline=0.5cm-0.5ex]{
\tgrid(2,2);
\tdown(1,1);
\tup(2,1);
\tclass(1,2){$\C$};
\tclass(2,2){$\D$};
}\]
is finitely based.
\end{lemma}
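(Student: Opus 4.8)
The plan is to imitate the proof of Lemma~\ref{lem-caseb}, with the two monotone cells of the bottom row interchanged (the left one now decreasing, the right one increasing). Let $\F$ be the horizontal juxtaposition of the vertical juxtaposition of $\C$ over $\Av(12)$ with the vertical juxtaposition of $\D$ over $\Av(21)$. By Lemma~\ref{lem-juxt-basis}, $\F$ is finitely based, so by Observation~\ref{obs-rel-basis} it suffices to prove that the relative basis $B$ of $\E$ in $\F$ is finite; and by Lemma~\ref{lem-squint-intersection} each $\pi\in B$ lies in exactly one of \squintA{} or \squintB. Reflecting the plane in a vertical line carries this instance of the third grid class to another instance of it (with $\C$ and $\D$ replaced by their reverses and swapped) while interchanging \squintA{} and \squintB, so it is enough to bound $|\pi|$ for a relative basis element $\pi\in\squintA$.

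Fix such a $\pi$. Exactly as in Lemma~\ref{lem-caseb}, choose division triples $(v_L,r_L,\ell_L)$ and $(v_R,r_R,\ell_R)$ recognising $\pi\in\squintA$, with $v_L$ the leftmost and $v_R$ the rightmost admissible v-line and, subject to these, with $\ell_L,\ell_R$ as high as possible and $r_L,r_R$ as low as possible. Since below a left h-line the left column must now be \emph{decreasing}, the highest admissible left h-line is pinned by a copy of $12$ — in place of the copy of $21$ in Lemma~\ref{lem-caseb} — one of whose two points lies immediately above the line; and since below a right h-line the right column must be \emph{increasing}, the lowest admissible right h-line $r_R$ is pinned, just as before, by a basis element of $\D$ lying to the right of $v_R$, one point of which sits immediately below $r_R$. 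One then argues, mimicking Lemma~\ref{lem-caseb}, that the point of the $\D$-basis element just below $r_R$, together with the monotone constraint in the bottom-right cell, evacuates a rectangle strictly between $v_L$ and $v_R$, and that this forces the copy of $12$ defining the highest left h-line to lie outside the strip bounded by $v_L$ and $v_R$; consequently that copy of $12$ pins the highest left h-line in \emph{every} division triple recognising $\pi\in\squintA$, so all these left h-lines share one height. Granting this, $\pi\notin\E$ is witnessed by a bounded set of points: a basis element of the vertical juxtaposition of $\C$ over $\Av(12)$ pinning $v_R$, a basis element of the vertical juxtaposition of $\D$ over $\Av(21)$ pinning $v_L$, a copy of $12$ pinning the left h-line, and a basis element of $\D$ pinning $r_R$ — these already preclude any division triple of $\pi$ from having its left h-line and right h-line at equal heights. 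By the minimality of $\pi$ it therefore consists of exactly these points, so $|\pi|$ is bounded in terms of the bases of $\C$ and $\D$; since there are only finitely many permutations of any bounded length, $B$ is finite, and Observation~\ref{obs-rel-basis} gives that $\E$ is finitely based. Combined with Lemmas~\ref{lem-casea} and~\ref{lem-caseb}, this proves Theorem~\ref{thm-twobytwo} and hence Theorem~\ref{thm-twobytwo-mono}.

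The step I expect to need the most care — the main obstacle — is the geometry of this ``evacuated rectangle''. In Lemma~\ref{lem-caseb} the two points of the relevant copy of $21$ automatically end up on the same side of $v_L$, because the \emph{upper} one lies to the \emph{left}; but in the present case the copy of $12$ pinning the highest left h-line has its upper point on the \emph{right} and its lower point on the left, so emptying the strip below $\ell_R$ only immediately forces the lower point to lie left of $v_L$. Pushing the argument through will therefore require either a more careful choice of the evacuated region (so that the upper point is excluded as well), or, failing that, an auxiliary bound on the heights of the right h-lines — obtained by the symmetric reasoning, noting that the highest right h-line is pinned by a copy of $21$ since the bottom-right cell is increasing — used to pin the configuration down. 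Determining which variant works, together with the routine disposal of the degenerate configurations (where $\ell_L=\ell_R$ outright, or where the two or three pinning points partly coincide, handled as in Lemmas~\ref{lem-casea} and~\ref{lem-caseb}), is where the real work lies; the remaining ingredients — the reduction to \squintA{}, the finite basedness of $\F$ via Lemma~\ref{lem-juxt-basis}, and the concluding finite-antichain argument — are routine and identical to the earlier cases.
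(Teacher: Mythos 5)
Your setup (reduction to the relative basis in $\F$, the split into \squintA{} and \squintB, the reflection symmetry, and the choice of the extremal triples $(v_L,r_L,\ell_L)$ and $(v_R,r_R,\ell_R)$) matches the paper, but the central structural claim you rely on --- that all highest left h-lines share a single height --- is false here, and the obstacle you flag at the end is not a technicality to be smoothed over but the point where the argument genuinely changes shape. Two things go wrong. First, the evacuation argument from Lemma~\ref{lem-caseb} does not transfer: there, the point of the $\D$-basis element sitting just below $r_R$ lies in a \emph{decreasing} bottom-right cell, so in the triple $(v_L,r_L,\ell_L)$ it forbids every point below it and to its left, emptying the strip between $v_L$ and $v_R$ below $\ell_R$; here the bottom-right cell is \emph{increasing}, so that point only forbids points \emph{above} its own height and to its left --- a region near the top of the cell that says nothing about the strip below $\ell_R$. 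Second, even if that strip were empty, this would only push the `1' of the copy of $12$ pinning $\ell_R$ to the left of $v_L$; its `2' lies just above $\ell_R$ and to the \emph{right} of the `1', and can perfectly well sit strictly between $v_L$ and $v_R$ (this is exactly the configuration of the point $a$ in Figure~\ref{fig-casec}). When it does, $\ell_L$ is pinned by a \emph{different} copy of $12$ and is strictly higher than $\ell_R$, so neither of your proposed repairs can succeed in forcing a single height.

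The paper's resolution is that this case behaves like Lemma~\ref{lem-casea}, not Lemma~\ref{lem-caseb}: one shows that the highest left h-line takes at most \emph{two} heights --- that of $\ell_R$ when the v-line passes to the right of the `2' of the copy of $12$ pinning $\ell_R$, and that of $\ell_L$ when it passes to its left --- and then pins the corresponding lowest right h-lines using \emph{two} basis elements of $\D$, one for each height. Your inventory of identified points (one copy of $12$ and one basis element of $\D$) is accordingly one of each short, and the ``single height'' version of the concluding minimality argument does not go through as written. With the two-height claim and the doubled inventory in place, the rest of your outline is sound.
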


\begin{proof}[(sketch)]
As before, by symmetry it suffices to consider a relative basis element $\pi\in\squintA$ of $\E$. Define the division triples $(v_R,r_R,\ell_R)$ and $(v_L,r_L,\ell_L)$ as in earlier proofs.

We claim that in any division triple $(v,r,\ell)$ recognising $\pi\in\squintA$ where $\ell$ is as high as possible, $\ell$ has the same height as either $\ell_L$ or $\ell_R$. The situation is illustrated in Figure~\ref{fig-casec}: if $v$ lies to the right of the point $a$ then $\ell$ can be no higher than $\ell_R$. On the other hand, if $v$ lies to the left of $a$, then the only available copy of 12 has $b$ as the `2', so $\ell$ has the same height as $\ell_L$.

\begin{figure}[h]
\centering
\begin{tikzpicture}[scale=0.4]
\draw [gray, dashed] (3,0) -- (3,10);
\draw [gray, dashed] (0,6) -- (3,6);
\draw [gray, dashed] (3,8) -- (10,8);
\draw [gray, dotted] (7,0) -- (7,10);
\draw [gray, dotted] (0,3) -- (7,3);
\draw [gray, dotted] (7,5) -- (10,5);
\draw [gray, ->] (3.5,9) -- (3,9);
\draw [gray, ->] (6.5,9) -- (7,9);
\draw [gray, ->] (2.5,2.5) -- (2.5,3);
\draw [gray, ->] (2.5,5.5) -- (2.5,6);
\draw [gray, ->] (7.5,5.5) -- (7.5,5);
\draw [gray, ->] (7.5,8.5) -- (7.5,8);
\node (a) at (8,4.7) [label=-45:$c$] {};
\node (b) at (9.5,7.7) {};
\node (c) at (1.5,2.5) {};
\node (d) at (5.7,3.3) [label=135:$a$] {};
\node (e) at (5,1) {};
\node at (2.5,6.3) [label=135:$b$] {};
\draw (a) -- (7.3,4.3);
\draw (b) -- (8.5,5.3);
\draw (2.5,1.5) -- (c);
\draw (6.7,4) -- (d);
\draw (1.3,3.3) -- (0.5,5.7);
\node at (10,8) [invis_nd,label=right:$r_L$] {};
\node at (10,5) [invis_nd,label=right:$r_R$] {};
\node at (0,6) [invis_nd,label=left:{$\ell_L$}] {};
\node at (0,3) [invis_nd,label=left:{$\ell_R$}] {};
\node at (3,10) [invis_nd,label=above:$v_L$] {};
\node at (7,10) [invis_nd,label=above:$v_R$] {};
\end{tikzpicture}
\caption{The left h-line $\ell_R$ is defined by the points $a$ and $b$ which form a copy of 12. Since $a$ lies to the left of $v_L$, the left h-line $\ell_L$ can be no higher than $\ell_R$.}
\label{fig-casec}
\end{figure}

With these two left h-lines defined, we need only identify two copies of basis elements of $\D$ to define corresponding lowest right h-lines in each case. Thus, $\pi\not\in\E$ is identified by the following points: (i) a basis element of \tikz[scale=0.4,baseline=0.4cm-0.5ex]{
\tgrid(1,2);
\tdown(1,1);
\tclass(1,2){\footnotesize$\C$};}
to defines $v_R$, (ii) a basis element of
\tikz[scale=0.4,baseline=0.4cm-0.5ex]{
\tgrid(1,2);
\tup(1,1);
\tclass(1,2){\footnotesize$\D$};}
to define $v_L$, (iii) at most two copies of $21$ to define $\ell_R$ and $\ell_L$, and (iv) at most two basis elements of $\D$ to define $r_R$ and $r_L$.
\end{proof}

\begin{proof}[of Theorem~\ref{thm-twobytwo-mono}]
First, the only $2\times 2$ monotone grid classes whose row-column graphs are not forests (and hence finitely based by~\cite{albert:geometric-grid-:}) are those where all four cells are non-empty.

Any such $2\times 2$ monotone grid class can be described as a grid class in one of the three forms covered by Lemmas~\ref{lem-casea},~\ref{lem-caseb} and~\ref{lem-casec}, upon taking the classes $\C$ and $\D$ to be $\Av(12)$ or $\Av(21)$, and possibly appealing to symmetry.
\end{proof}

\section{Concluding remarks}

\paragraph{Non-monotone 2 $\times$ 2 grids} One obvious question arising from this work is how far one might be able to extend Theorem~\ref{thm-twobytwo} within the context of $2\times 2$ grids: in particular, can one replace the two monotone classes in the lower row by something more general? Any approach to this question would need to bear in mind that there do exist $2\times 2$ grid classes which are \emph{not} finitely based, even though each entry of the matrix is finitely based. The primary example of this, given both in Murphy's PhD thesis~\citep{murphy:restricted-perm:} and in~\citep{atkinson:restricted-perm:a}, is 
\[
\tikz[scale=0.5,baseline=0.5cm-0.5ex]{
\tgrid(2,2);
\tclass(1,1){$\C$};
\tclass(2,2){$\C$};
\tclass(1,2){$\varnothing$};
\tclass(2,1){$\varnothing$};
}\]
where $\C=\Av(321654)$. (Note this example is more normally written as a \emph{direct sum}, $\C\oplus\C$.) This example can likely be adapted to produce other instances where the grid class is not finitely based, even though its individual entries are.

\paragraph{Larger grids} There are a number of difficulties encountered when one tries to extend our results here to larger grids. Even in the ``next'' case of $2\times 3$ grids, there seems to be no obvious analogue to Lemma~\ref{lem-squint-intersection} to enable us to consider relative bases inside some larger class. The primary issue is that our proof relied on the fact that the heights of all possible left-h-lines (or, analogously, right-h-lines) form a contiguous set of values, but this need no longer be the case. 

\paragraph{Acknowledgements} We are grateful to Mike Atkinson for several fruitful discussions about this problem, from which most of the ideas for this note emerged.
\bibliographystyle{abbrvnat}
\bibliography{../../refs}

\def\cprime{$'$}
\begin{thebibliography}{12}
\providecommand{\natexlab}[1]{#1}
\providecommand{\url}[1]{\texttt{#1}}
\expandafter\ifx\csname urlstyle\endcsname\relax
  \providecommand{\doi}[1]{doi: #1}\else
  \providecommand{\doi}{doi: \begingroup \urlstyle{rm}\Url}\fi

\bibitem[Albert et~al.(2013)Albert, Atkinson, Bouvel, Ru{\v{s}}kuc, and
  Vatter]{albert:geometric-grid-:}
M.~H. Albert, M.~D. Atkinson, M.~Bouvel, N.~Ru{\v{s}}kuc, and V.~Vatter.
\newblock Geometric grid classes of permutations.
\newblock \emph{Trans. Amer. Math. Soc.}, 365\penalty0 (11):\penalty0
  5859--5881, 2013.
\newblock ISSN 0002-9947.
\newblock \doi{10.1090/S0002-9947-2013-05804-7}.
\newblock URL \url{http://dx.doi.org/10.1090/S0002-9947-2013-05804-7}.

\bibitem[Albert et~al.(2015)Albert, Ru{\v{s}}kuc, and
  Vatter]{albert:inflations-of-g:}
M.~H. Albert, N.~Ru{\v{s}}kuc, and V.~Vatter.
\newblock Inflations of geometric grid classes of permutations.
\newblock \emph{Israel J. Math.}, 205\penalty0 (1):\penalty0 73--108, 2015.

\bibitem[Atkinson(1999)]{atkinson:restricted-perm:}
M.~D. Atkinson.
\newblock Restricted permutations.
\newblock \emph{Discrete Math.}, 195\penalty0 (1-3):\penalty0 27--38, 1999.
\newblock ISSN 0012-365X.
\newblock URL \url{http://dx.doi.org/10.1016/S0012-365X(98)00162-9}.

\bibitem[Atkinson and Stitt(2002)]{atkinson:restricted-perm:a}
M.~D. Atkinson and T.~Stitt.
\newblock Restricted permutations and the wreath product.
\newblock \emph{Discrete Math.}, 259\penalty0 (1-3):\penalty0 19--36, 2002.
\newblock ISSN 0012-365X.
\newblock URL \url{http://dx.doi.org/10.1016/S0012-365X(02)00443-0}.

\bibitem[Bevan(2015)]{bevan:growth-rates:}
D.~Bevan.
\newblock Growth rates of permutation grid classes, tours on graphs, and the
  spectral radius.
\newblock \emph{Trans. Amer. Math. Soc.}, 367\penalty0 (8):\penalty0
  5863--5889, 2015.

\bibitem[Huczynska and Vatter(2006)]{huczynska:grid-classes-an:}
S.~Huczynska and V.~Vatter.
\newblock Grid classes and the {F}ibonacci dichotomy for restricted
  permutations.
\newblock \emph{Electron. J. Combin.}, 13:\penalty0 R54, 14 pp., 2006.
\newblock URL
  \url{http://www.combinatorics.org/Volume_13/Abstracts/v13i1r54.html}.

\bibitem[Kaiser and Klazar(2003)]{kaiser:on-growth-rates:}
T.~Kaiser and M.~Klazar.
\newblock On growth rates of closed permutation classes.
\newblock \emph{Electron. J. Combin.}, 9\penalty0 (2):\penalty0 Research paper
  10, 20 pp. (electronic), 2003.
\newblock ISSN 1077-8926.
\newblock URL
  \url{http://www.combinatorics.org/Volume_9/Abstracts/v9i2r10.html}.

\bibitem[Murphy(2002)]{murphy:restricted-perm:}
M.~M. Murphy.
\newblock \emph{Restricted permutations, antichains, atomic classes, and stack
  sorting}.
\newblock PhD thesis, Univ. of St Andrews, 2002.

\bibitem[Murphy and Vatter(2003)]{murphy:profile-classes:}
M.~M. Murphy and V.~Vatter.
\newblock Profile classes and partial well-order for permutations.
\newblock \emph{Electron. J. Combin.}, 9\penalty0 (2):\penalty0 Research paper
  17, 30 pp. (electronic), 2003.
\newblock ISSN 1077-8926.
\newblock URL
  \url{http://www.combinatorics.org/Volume_9/Abstracts/v9i2r17.html}.

\bibitem[Stankova(1994)]{stankova:forbidden-subse:}
Z.~E. Stankova.
\newblock Forbidden subsequences.
\newblock \emph{Discrete Math.}, 132\penalty0 (1-3):\penalty0 291--316, 1994.
\newblock ISSN 0012-365X.
\newblock URL \url{http://dx.doi.org/10.1016/0012-365X(94)90242-9}.

\bibitem[Vatter(2011)]{vatter:small-permutati:}
V.~Vatter.
\newblock Small permutation classes.
\newblock \emph{Proc. Lond. Math. Soc. (3)}, 103:\penalty0 879--921, 2011.

\bibitem[Waton(2007)]{waton:on-permutation-cl:}
S.~Waton.
\newblock \emph{On Permutation Classes Generated by Token Passing Networks,
  Gridding Matrices and Pictures: Three Flavours of Involvement}.
\newblock PhD thesis, Univ. of St Andrews, 2007.

\end{thebibliography}

\end{document}